\def \C{\mathbb{C}}
\def\R{\mathbb{R}}
\def\Z{\mathbb{Z}}
\def\zplus{\mathbb{Z}^+}
\def\epsilon{\varepsilon}
\def\phi{\varphi}
\def\max{\text{Max}}
\def\st{\; | \;}
\def\endo{\text{End}}
\def\cfm #1{\text{CFM}(#1)}
\def\rfm #1{\text{RFM}(#1)}
\def\rcfm #1{\text{RCFM}(#1)}
\def\fsm #1{\text{FSM}(#1)}
\newtheorem{theorem}{Theorem}
\newtheorem{proposition}[theorem]{Proposition}
\newtheorem{corollary}[theorem]{Corollary}
\newtheorem{lemma}[theorem]{Lemma}
\theoremstyle{definition}
\newtheorem{example}[theorem]{Example}
\newtheorem{definition}[theorem]{Definition}
\newtheorem{remark}[theorem]{Remark}
\theoremstyle{remark}
\begin{document}

\title{On the Associativity of Infinite Matrix Multiplication}%
\author{Daniel P. Bossaller}
\email{db684513@ohio.edu}

\author{Sergio R. L\'opez-Permouth}
\email{lopez@ohio.edu}

\address{Ohio University Department of Mathematics}
\maketitle
\begin{abstract}
A natural definition of the product of infinite matrices mimics the usual formulation of multiplication of finite matrices with the caveat (in the absence of any sense of convergence) that the intersection of the support of each row of the first factor with the support of each column of the second factor must be finite. Multiplication is hence not completely defined, but restricted to a specific relation on infinite matrices. In order for the product of three infinite matrices $A$, $B$, and $C$ to behave in an associative manner, the middle factor, $B$, must link $A$ and $C$ in three ways: (i) $AB$ and $BC$ must both be defined; (ii) $A(BC)$ and $(AB)C$ must both be defined; and, finally, (iii) $A(BC)$ must equal $(AB)C$. In this article, these conditions are studied and are characterized in various ways by means of summability notions akin to those of formal calculus.
\end{abstract}

\section{Introduction.}
A central problem in linear algebra is finding the solution of a system of linear equations in several unknowns. A typical undergraduate linear algebra course covers, in detail, how to find the solution to an equation $Av = b$  for some $m \times n$ matrix $A$ and vectors $v \in F^n$ and $b \in F^m$, where $F$ is some field (usually $\R$ or $\C$). One typically finds a solution by first reducing $A$ to some ``nice" form using row operations, which can be represented by an invertible matrix $U$. This then gives a solvable system of linear equations in
\[(UA) v = Ub;\] call this solution $v = a$. However, $a$ may only be a solution to this particular matrix equation, $UA a = Ub$, which, in theory, need not be equivalent to the original system. In order to convert this new system into the original system, one must undo the row operations represented by $U$ and thus multiply by the inverse of $U$, which we call $V$. Multiplying on the left by $V$ transforms $(UA)a = Ub$ into $V(UA)a = (VU)Aa = VUb$ (note that there is an implicit assumption of associativity of multiplication in this above statement); thus $Aa = b$, and $a$ is a solution to $Av = b$.

In the context of finite matrices, this process works because multiplication of finite matrices is associative. Unfortunately the process breaks down when the matrices are infinite, indeed, consider the following matrices:
\begin{equation*}
\begin{split}
V = \begin{pmatrix}
1 &1 &1 &\cdots\\
0 &1 &1 &\cdots\\
0 &0 &1 &\cdots\\
\vdots &\vdots &\vdots &\ddots
\end{pmatrix}\text{, } &U =  \begin{pmatrix}
1 &-1 &0 &\cdots\\
0 &1 &-1 &\cdots\\
0 &0 &1 &\cdots\\
\vdots &\vdots &\vdots &\ddots 
\end{pmatrix},
\\
A = \begin{pmatrix}
0 &0 &0 &\cdots\\
-1 &0 &0 &\cdots\\
-1 &-1 &0 &\cdots\\
\vdots &\vdots &\vdots &\ddots
\end{pmatrix}, &\text{ and } b = \begin{pmatrix} 1\\ 1\\ 1\\ \vdots\end{pmatrix}.
\end{split}
\end{equation*}

One can easily see that $Av = b$ has no solution; it is also seen that $(UA)v = Ub$ has a solution, namely the zero vector $v = 0$, because $UA$ is the identity matrix and $Ub = 0$. The above matrices satisfy every step of the process outlined above; however, in the context of infinite matrices the process fails to be well-defined since $V(UA) \neq (VU)A$. The linchpin in this process seems to be the resolution of the following question: Given three matrices, when is their product associative? The proof in Section 4 from \cite{lulu} illustrates the complications that arise trying to circumvent such difficulties while solving such an infinite system, $Av = b$.

Let us introduce some notation. Denote by $M_\infty(F)$, the vector space of infinite matrices whose rows and columns are indexed by $\zplus$ with entries in the field $F$ (note that arbitrary products of these matrices need not be defined). We will denote by $\cfm F$ (respectively, $\rfm F$)  the subspace of column (row) finite matrices those matrices whose columns (rows) have finitely many nonzero elements.  We will write the intersection of $\rfm F$ and $\cfm F$, the space of row and column finite matrices, by $\rcfm F$ where each row and column has a finite number of nonzero entries; note that the matrix itself may have infinitely many nonzero entries. Also important will be the subspace of finitely supported matrices i.e., those with only finitely many nonzero entries, and we will denote these by $\fsm F$. It is of general interest to note that for a $F$-vector space $V$ of infinite dimension with basis $\{e_i \st i \in \Z^+\}$, the space of endomorphisms acting of the left of $V$, denoted $\endo_F(V)$, is isomorphic to $\cfm F$. (In the case where we have the endomorphisms acting on the right, $\endo_F(V) \simeq \rfm F$.)

In this article $F$ denotes a field of characteristic zero, and we do not endow it with a specific norm. Similar results to those in this article appear in the literature (such as in \cite{bernkopf}, \cite{cooke}), but with proofs that depend on the convergence of infinite series. 

\section{When does $AB$ Exist?}
Before discussing the associativity of matrix multiplication, we devote this section to a study of precisely when the product of two infinite matrices exists. Because we seek results in the absence of convergence, the only allowable sums will be finite sums. Take two infinite matrices $A$ and $B$, and write $A = (a_{ij})_{i,j \in \Z^+}$ and $B = (b_{jk})_{j,k \in \Z^+}$ with entries in the field $F$. The (formal) product $AB$ can be represented as the following array of formal sums.
\begin{equation}\label{matrix multiplication}
AB = \begin{pmatrix}
\sum_{j=1}^\infty a_{1j}b_{j1} &\sum_{j=1}^\infty a_{1j}b_{j2} &\sum_{j=1}^\infty a_{1j}b_{j3} &\cdots\\
\sum_{j=1}^\infty a_{2j}b_{j1} &\sum_{j=1}^\infty a_{2j}b_{j2} &\sum_{j=1}^\infty a_{2j}b_{j3} &\cdots\\
\sum_{j=1}^\infty a_{3j}b_{j1} &\sum_{j=1}^\infty a_{3j}b_{j2} &\sum_{j=1}^\infty a_{3j}b_{j3} &\cdots\\
\vdots &\vdots &\vdots &\ddots 
\end{pmatrix}.\end{equation}
Since we are working in a field, this formal product is defined in $M_\infty(F)$ only when each of the formal sums has finitely many nonzero summands. We will take this observation as our first definition for when a matrix product is ``defined."
\begin{definition}\label{defined}
If $A$ and $B$ are two matrices in $M_\infty(F)$. Then their product is \textbf{defined} if for every choice of $i$ and $k$ in $\Z^+$, $\{j \st a_{ij}b_{jk} \neq 0\}$ is finite. 
\end{definition}

An interesting phenomenon occurs when $A$ is written in terms of its columns. Say $A = (A_{*j})_{j \in \Z^+}$ where $A_{*j} = (a_{ij})_{i \in \Z^+}$ is the $j$th column of $A$. The first column of (\ref{matrix multiplication}) above may be written as
\[A_{*1}\cdot b_{11} + A_{*2} \cdot b_{21} + A_{*3} \cdot b_{31} + \cdots,\] an infinite linear combination of the columns of $A$. Given the conventions taken on $F$, one may be tempted to state that if $\{i \in \zplus \st A_{*i}b_{i1} \neq 0\}$ is an infinite set, then the product of $A$ and $B$ cannot be defined. Actually, something more subtle is required; consider the following definition from \cite{VOALepowsky}.

\begin{definition}
Let $\{V_i \st i \in I\}$ be a family of vectors indexed by the set $I$. Denote $V_i(j)$ to be the $j$th entry in the $i$th vector. Then the family $\{V_i \st i \in I\}$ is \textbf{summable} if for every $j \in \Z^+$, the set $\{i \st V_i(j) \neq 0\}$ is finite. (In the literature this set is often called the \textbf{support} of the $j$th entry in this family.) By considering entries of a matrix instead of entries of a vector, this definition can be extended easily to the context of infinite matrices.
\end{definition}

\begin{remark}
In this article, for notational convenience and to preserve the intuition that formed the following results, we will often write an entry of a vector or matrix as a formal sum as in equation (\ref{matrix multiplication}), regardless of whether the entry is finitely or infinitely supported.
\end{remark}

Using the definition above one may give equivalent characterizations of the families of row and column finite matrices.

\begin{lemma}
A matrix is column finite if and only if its family of rows is summable. Likewise, a matrix is row finite if and only if its family of columns is summable.
\end{lemma}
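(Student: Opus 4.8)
The plan is to prove both equivalences by directly unwinding the definition of summability, since the statement is essentially a dictionary translation between "finiteness of supports of columns/rows" and "summability of the family of rows/columns." I would set $A = (a_{ij})_{i,j \in \Z^+}$ and, as in the excerpt, write $R_i = (a_{ij})_{j \in \Z^+}$ for the $i$th row viewed as a vector, so that $R_i(j) = a_{ij}$; dually, write $A_{*j} = (a_{ij})_{i \in \Z^+}$ for the $j$th column, so that $A_{*j}(i) = a_{ij}$.

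For the first equivalence, I would observe that, by definition, the family of rows $\{R_i \st i \in \Z^+\}$ is summable precisely when, for every fixed $j \in \Z^+$, the set $\{i \st R_i(j) \neq 0\}$ is finite. Substituting $R_i(j) = a_{ij}$, this set is exactly $\{i \st a_{ij} \neq 0\}$, which is the support of the $j$th column of $A$. Hence the family of rows is summable if and only if every column of $A$ has finite support, i.e.\ if and only if $A$ is column finite. The second equivalence is the mirror image: the family of columns $\{A_{*j} \st j \in \Z^+\}$ is summable exactly when $\{j \st A_{*j}(i) \neq 0\} = \{j \st a_{ij} \neq 0\}$ is finite for every fixed $i$, and this last condition says precisely that every row of $A$ has finite support, i.e.\ that $A$ is row finite.

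Since the argument is a direct definition chase, there is no serious obstacle; the only point demanding care is bookkeeping, namely keeping straight which index is held fixed (the entry position, $j$ for rows and $i$ for columns) and which index ranges over the family (the label $i$ for rows and $j$ for columns), so that "support of the $j$th entry of the family of rows" is correctly identified with "support of the $j$th column of $A$," and not accidentally transposed. I would present the first equivalence in full and then simply remark that the second follows by interchanging the roles of rows and columns (equivalently, by applying the first equivalence to the transpose $A^{\mathsf T}$, if a transpose has been introduced).
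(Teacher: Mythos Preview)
Your proposal is correct; it is the straightforward definition chase, and there is really no other approach. The paper in fact states this lemma without proof, treating it as an immediate observation from the definition of summability, so your argument is exactly what one would supply to fill that gap.
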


With this new definition of a summable family of vectors in hand, we now have the tools to further analyze conditions for matrix multiplication to exist.

\begin{definition}\label{leftrightdefined}
Let $A, B \in M_\infty(F)$. Then the product $AB$ is \textbf{right defined} if for every $k \in \Z^+$, the family $\{A_{*j}b_{jk} \st j \in \zplus\}$ is summable. Similarly, we say $AB$ is \textbf{left defined} if for each $i \in \zplus$, the family $\{a_{ij}B_{j*} \st j \in \Z^+\}$ is summable. 
\end{definition}

As expected from the discussion of these three definitions of the product of two matrices $A$ and $B$, we have the following proposition, which will grant us some flexibility for the remainder of the article. 
\begin{proposition} \label{defined leftrightdefined}
Let $A = (a_{ij})_{i,j \in \zplus}$ and $B = (b_{jk})_{j,k \in \zplus}$. Then the following are equivalent:
\begin{enumerate}
\item $AB$ is defined
\item $AB$ is right defined
\item $AB$ is left defined.
\end{enumerate}
\end{proposition}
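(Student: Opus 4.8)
The plan is to observe that each of the three conditions, once the definition of a summable family is unwound, amounts to the single assertion
\[
(\star)\qquad \text{for all } i,k\in\zplus,\quad \{\,j\in\zplus \st a_{ij}b_{jk}\neq 0\,\}\text{ is finite}.
\]
Since $(\star)$ is precisely the content of Definition~\ref{defined}, statement (1) is $(\star)$ verbatim, and it remains only to reconcile (2) and (3) with it. There is no need to run a cycle of implications; each of the three is seen to be equivalent to $(\star)$ on its own.

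For the equivalence of (1) and (2), I would note that the $i$th entry of the column vector $A_{*j}b_{jk}$ is $(A_{*j}b_{jk})(i)=a_{ij}b_{jk}$. Hence, for a fixed $k$, the family $\{A_{*j}b_{jk}\st j\in\zplus\}$ is summable exactly when, for every $i\in\zplus$, the support $\{j\st a_{ij}b_{jk}\neq0\}$ is finite; quantifying over all $k$ turns this statement into $(\star)$, so ``$AB$ right defined'' and ``$AB$ defined'' are literally the same condition. The equivalence of (1) and (3) is the mirror image: the $k$th entry of the row vector $a_{ij}B_{j*}$ is $(a_{ij}B_{j*})(k)=a_{ij}b_{jk}$, so for a fixed $i$ the family $\{a_{ij}B_{j*}\st j\in\zplus\}$ is summable exactly when $\{j\st a_{ij}b_{jk}\neq 0\}$ is finite for every $k$, and quantifying over $i$ yields $(\star)$ once more.

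The only point that demands care is the bookkeeping of the quantifiers: ``defined'' carries a single quantifier ranging over the pair $(i,k)$, whereas ``right defined'' splits this into an outer quantifier ``for every $k$'' together with the quantifier concealed inside ``summable'' (which ranges over the entry index $i$), and ``left defined'' splits it the other way around. Once one checks that in both unwindings the scalar singled out is the same product $a_{ij}b_{jk}$, the three conditions coincide, and the proof is complete. I expect this matching of quantifiers to be the whole of the difficulty; in particular no notion of convergence is invoked, and indeed the set $\{j\st a_{ij}b_{jk}\neq 0\}$ is manifestly independent of how the quantifiers are arranged around it.
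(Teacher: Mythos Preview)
Your proof is correct and is essentially the same as the paper's: both unwind the definitions of ``right defined'' and ``left defined'' and observe that the resulting condition on the support $\{j\st a_{ij}b_{jk}\neq 0\}$ is exactly Definition~\ref{defined}. The only difference is cosmetic---the paper phrases it as $(1)\Leftrightarrow(2)$ plus a symmetric $(1)\Leftrightarrow(3)$, while you show each is literally the condition $(\star)$---but the content is identical.
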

\begin{proof}
This proof will show $(1) \Leftrightarrow (2)$, and a symmetric argument gives $(1) \Leftrightarrow (3)$. Say that $AB$ is defined; let $k \in \zplus$, and consider the family $\{A_{*j}b_{jk} \st j \in \zplus\}$. In order to show this family is summable, we must show that the support of the $i$th entry is finite, in other words we must assure that for some fixed $i$ and $k$, $\{j \st a_{ij}b_{jk} \neq 0, j \in \zplus\}$ is finite; thus, by assumption, we have our desired conclusion. Through a similar argument we have the reverse implication. 
\end{proof}

With this proposition in hand, consider, for instance, the following simpler proof of a well-known result.

\begin{corollary}\label{rstar starc}
Let $A \in \rfm F$ and $B \in \cfm F$. Then for any matrix $M \in M_\infty(F)$, $AM$ and $MB$ are both defined.
\end{corollary}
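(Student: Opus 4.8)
The plan is to reduce the statement directly to Definition \ref{defined}, exploiting the fact that row- or column-finiteness already pins down the relevant index set before $M$ is taken into account. For the claim that $AM$ is defined, fix $i,k \in \zplus$; then
\[ \{j \st a_{ij}m_{jk} \neq 0\} \subseteq \{j \st a_{ij} \neq 0\}, \]
and the right-hand set is finite because $A \in \rfm F$. Symmetrically, to see $MB$ is defined, fix $i,k$ and observe that $\{j \st m_{ij}b_{jk} \neq 0\} \subseteq \{j \st b_{jk} \neq 0\}$, which is finite since $B \in \cfm F$. That already settles it.

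Alternatively, and perhaps more in keeping with the preceding proposition, one can phrase the argument through the left/right defined conditions: $AM$ is left defined because for each fixed $i$ the family $\{a_{ij}M_{j*} \st j \in \zplus\}$ is supported on the (finite) support of the $i$th row of $A$, all other members being the zero vector, and hence is summable; dually $MB$ is right defined because for each fixed $k$ the family $\{M_{*j}b_{jk} \st j \in \zplus\}$ is supported on the (finite) support of the $k$th column of $B$, hence summable. By Proposition \ref{defined leftrightdefined}, both products are defined.

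There is no real obstacle here: the whole content lies in noticing that the finiteness hypothesis on one factor dominates whatever the arbitrary factor $M$ contributes, so no genuine interaction between $A$ (or $B$) and $M$ needs to be examined. The one point deserving a word of care is that a family of vectors indexed by a finite set is automatically summable, since each entry's support is then a subset of that finite index set; this is exactly what makes the second argument go through. I expect the final write-up to be only two or three sentences.
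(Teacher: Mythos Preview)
Your proposal is correct, and your second argument is exactly the paper's own proof: fix $i$, note that $\{j \st a_{ij} \neq 0\}$ is finite by row-finiteness, so the family $\{a_{ij}M_{j*}\}$ is summable and $AM$ is left defined (hence defined by Proposition~\ref{defined leftrightdefined}); the $MB$ case is symmetric. Your first argument, going straight to Definition~\ref{defined}, is a perfectly good and slightly more direct variant of the same idea.
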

\begin{proof}
Consider the product $AM$. Write $M = (M_{j*})_{j \in \zplus}$ and take $i \in \zplus$ arbitrary but fixed. Note that $\{j \st a_{ij} \neq 0\}$ is a finite set since $A$ is row finite. But this means that $\{j \st a_{ij}M_{j*} \neq 0\}$ is finite and thus the family is summable. So $AM$ is defined. The argument for $MB$ is similar. 
\end{proof}

\section{Associativity.}
Take three arbitrary matrices $A, B, C \in M_\infty(F)$; we saw in the previous section that there is no guarantee that $AB$ and $BC$ need be defined. Even if they were defined, we saw three matrices, $V, U,$ and $A$ in our very first example where $VU$, $UA$, $V(UA)$, and $(VU)A$ are defined but $V(UA) \neq (VU)A$. It is therefore necessary to coin some terminology.
\begin{definition} A triple $(A,B,C)$ of invertible matrices is an \textbf{associative triple} if it satisfies the following three properties.
\begin{enumerate}
\item $AB$ and $BC$ are defined,
\item $A(BC)$ and $(AB)C$ are defined, and
\item $A(BC) = (AB)C$. 
\end{enumerate}
When the triple $(A,B,C)$ is an associative triple, we simply write $A(BC) = (AB)C$ as the first two conditions are implicit in the third.
\end{definition}
\begin{remark}
This definition of an associative triple might feel a bit pedantic; after all, the statement ``$A(BC) = (AB)C$" implicitly assumes that the products $AB$, $BC$, $A(BC)$, and $(AB)C$ are all defined. The reason for the stratification of the definition is that over the next few sections we will study three successively larger vector spaces based on these conditions.
\end{remark}

Consider the following example from \cite{keremedis}.
\begin{example}\label{multiplication not associative}
Let
\[A = \begin{pmatrix}
1 &1 &1 &\cdots\\
0 &0 &0 &\cdots\\
0 &0 &0 &\cdots\\
\vdots &\vdots &\vdots &\ddots
\end{pmatrix}\text{, } B = \begin{pmatrix}
0 &1 &0 &\cdots\\
-1 &0 &1 &\cdots\\
0 &-1 &0 &\cdots\\
\vdots &\vdots &\vdots &\ddots
\end{pmatrix} \text{, and } C = \begin{pmatrix}
1 &0 &0 &\cdots\\
1 &0 &0 &\cdots\\
1 &0 &0 &\cdots\\
\vdots &\vdots &\vdots &\ddots
\end{pmatrix}.\]

Note that $(A,C,B)$ is not an associative triple, since $AC$ is not defined. However, $CB$ is defined and row and column finite; thus $A(CB)$ is defined (by Corollary \ref{rstar starc}). The triple $(A,B,C)$ also provides a good example of a triple of three matrices that satisfy the first two properties but fail the third. Because $B$ is both row and column finite, $AB$ and $BC$ are defined, satisfying the first property. Because both $AB$ and $BC$ are row and column finite, $A(BC)$ and $(AB)C$ are defined, but explicit calculation shows that these two products are not the same. 
\end{example}
This section and the next will illustrate that there are two different but equally clarifying lenses through which one can examine the triple $(A,B,C)$ of infinite matrices. The focus of this section will be the properties of $A$ and $C$ that guarantee associativity. The focus of the next section will be the role that the middle matrix, $B$, plays in determining the associativity of the triple. With this in mind, we introduce the following definitions.
\begin{definition}
Let $A$ and $C$ be fixed but arbitrary elements of $M_\infty(F)$. We say $B$ is a \textbf{link} between $A$ and $C$ if $AB$ and $BC$ are both defined. We say that $B$ is a \textbf{strong link} between $A$ and $C$ if $B$ is a link and $A(BC)$ and $(AB)C$ are both defined. We call $B$ an \textbf{associative link} between $A$ and $C$ if $B$ is a strong link and $A(BC) = (AB)C$. We will denote the family of links by $G_2(A,C)$, the family of strong links by $G_4(A,C)$ and the family of associative links by $G_5(A,C)$. The subscripts $2, 4,$ and $5$ refer to the number of conditions required to belong each family.
\end{definition}

It is a quick check to see that $G_i(A,C)$ is a vector subspace of $M_\infty(F)$ for each $i \in \{2, 4, 5\}$. Note that scalar multiplication is equivalent to multiplication by $\text{Diag}(\lambda, \lambda, \lambda, \ldots)$ where $\lambda \in F$.

\begin{example}
By construction of our families, one can see that $G_5(A,C) \subseteq G_4(A,C) \subseteq G_2(A,C)$; moreover, there exist matrices $A$ and $C$ for which the containments are proper. Let the matrices $A$ and $C$ be as in the previous example. Then note that
\[B = \begin{pmatrix}
0 &1 &0 &\cdots\\
-1 &0 &1 &\cdots\\
0 &-1 &0 &\cdots\\
\vdots &\vdots &\vdots &\ddots
\end{pmatrix}\text{ and }
B' = \begin{pmatrix}
0 &1 &0 &\cdots\\
1 &0 &1 &\cdots\\
0 &1 &0 &\cdots\\
\vdots &\vdots &\vdots &\ddots
\end{pmatrix}\] are examples of matrices contained in $G_4(A,C) \backslash G_5(A,C)$ and $G_2(A,C) \backslash G_4(A,C)$, respectively.
\end{example}

If $A$ and $C$ are arbitrary elements of $M_\infty(F)$, we have $\rcfm F \subseteq G_2(A,C)$ by Corollary \ref{rstar starc}. If there is any matrix $B$ that fails to be row finite or column finite, one may find $A$ or $C$ such that $AB$ or $BC$ is not defined. Indeed, if some row (or column) of $B$ has non-finite support, one need merely include a column of all 1's in $C$ (or row of all 1's in $A$) to make sure that $AB$ or $BC$ is not defined. These two observations together give
\[\bigcap_{A,C \in M_\infty(F)} G_2(A,C) = \rcfm F.\]
A similar phenomenon occurs with $G_4(A,C)$.
\begin{proposition}\label{cap G4 fsm}
\[\bigcap_{A,C \in M_\infty(F)} G_4(A,C) = \fsm F.\]
\end{proposition}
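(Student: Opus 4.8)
The plan is to prove the two inclusions separately. The easy one is $\fsm F \subseteq G_4(A,C)$ for \emph{every} $A$ and $C$: if $B \in \fsm F$, say all nonzero entries of $B$ lie in a finite set of rows $J_0$ and a finite set of columns $K_0$, then in particular $B \in \rcfm F$, so $AB$ and $BC$ are defined by Corollary~\ref{rstar starc}. Inspecting equation~(\ref{matrix multiplication}) shows that $(AB)_{i\ell}=0$ whenever $\ell\notin K_0$ and $(BC)_{km}=0$ whenever $k\notin J_0$; hence every row of $AB$ is supported in $K_0$, so $AB\in\rfm F$, and every column of $BC$ is supported in $J_0$, so $BC\in\cfm F$. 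A second application of Corollary~\ref{rstar starc} gives that $(AB)C$ and $A(BC)$ are defined, so $B\in G_4(A,C)$.

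For the reverse inclusion I would assume $B\notin\fsm F$ and produce $A,C$ with $B\notin G_4(A,C)$. If $B\notin\rcfm F$, then $B$ has a row or a column of infinite support, and the construction recalled just before the statement (inserting a row, resp.\ column, of $1$'s) already shows $B\notin G_2(A,C)\supseteq G_4(A,C)$ for a suitable choice. So the substantive case is $B\in\rcfm F\setminus\fsm F$; here $AB$ and $BC$ are automatically defined, and the goal is to arrange that $A(BC)$ is not defined. The key step is a greedy selection: since $B$ is column finite with infinitely many nonzero entries it has infinitely many nonzero columns, and I would choose nonzero columns $B_{*k_1},B_{*k_2},\dots$ whose supports $R_t:=\supp(B_{*k_t})$ are pairwise disjoint. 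Having chosen $k_1,\dots,k_{t-1}$, only finitely many columns can have a nonzero entry in the finite row set $R_1\cup\dots\cup R_{t-1}$ (because $B$ is row finite), so some nonzero column $k_t$ avoids those rows entirely; its support $R_t$ is finite, nonempty, and disjoint from all previous ones. Fix $j_t\in R_t$, so $b_{j_tk_t}\neq 0$ and the $j_t$ are distinct.

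Next I would let $v$ be the (infinitely supported) vector with $v_{k_t}=1$ for all $t$ and $v_\ell=0$ otherwise, take $C$ to be the matrix whose first column is $v$ and whose other columns are zero, and take $A$ to be the matrix whose first row is $(1,1,1,\dots)$ and whose other rows are zero. Since $B$ is row finite, $BC$ is defined and its first column is $Bv$; the pairwise disjointness of the $R_t$ means that for each index $j$ at most one summand $b_{jk_t}$ of $(Bv)_j$ is nonzero, so $Bv$ is genuinely defined and, in particular, $(Bv)_{j_t}=b_{j_tk_t}\neq 0$, giving $Bv$ infinite support. Since $B$ is column finite, $AB$ is defined; but the $(1,1)$-entry of $A(BC)$ is the formal sum $\sum_k (Bv)_k$, which has infinitely many nonzero summands, so $A(BC)$ is not defined. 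Hence $B\notin G_4(A,C)$, which gives $\bigcap_{A,C} G_4(A,C)\subseteq\fsm F$ and completes the proof.

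I expect the main obstacle to be exactly the column selection: the obvious move of picking one nonzero entry per nonzero row produces a vector $v$ for which cross terms in $Bv$ may cancel, so one cannot conclude $Bv$ is infinitely supported. Forcing the chosen columns to have pairwise disjoint supports is what eliminates every cross term, and verifying that this greedy choice can always be continued (using row finiteness to bound the ``blocked'' columns and column finiteness to keep each $R_t$ finite) is the one place where both finiteness hypotheses on $B$ are genuinely used. Everything else is routine bookkeeping with Definition~\ref{defined} and two invocations of Corollary~\ref{rstar starc}.
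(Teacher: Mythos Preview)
Your proof is correct, and in fact your construction is more careful than the paper's at exactly the point you flagged. For the reverse inclusion the paper, after reducing to $B\in\rcfm F\setminus\fsm F$, picks for each nonzero row $B_{j*}$ the column index $k_j$ of its \emph{first} nonzero entry, sets $C=\sum_{j}E_{k_j1}$, and asserts that the first column of $BC$ has infinitely many nonzero entries. That is precisely the ``obvious move'' you warned against, and your concern is justified: take $B$ to be the upper-bidiagonal matrix with $1$'s on the diagonal and $-1$'s on the superdiagonal. Then every $k_j=j$, the paper's $C$ has first column $(1,1,1,\dots)^T$, and $BC=0$; the asserted infinite support fails outright. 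Your greedy selection of columns with pairwise disjoint supports eliminates exactly these cross terms, and your verification that the selection can always be continued (row finiteness bounds the blocked columns, column finiteness keeps each $R_t$ finite, and $B\notin\fsm F$ with column finiteness forces infinitely many nonzero columns) is the right way to close the gap.

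The easy inclusion and the reduction to $B\in\rcfm F$ match the paper's argument, as does the final choice of $A$ with a row of ones. So the overall architecture is the same; the difference is entirely in the construction of $C$, and there your version is the one that actually works without further repair.
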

\begin{proof}
One inclusion is due to Corollary \ref{rstar starc} because $B \in \fsm F$ implies that $AB$ and $BC$ are defined; moreover, as $AB \in \rfm F$ and $BC \in \cfm F$, $\fsm F \subseteq G_4(A,C)$ for any choice of $A$ and $C$. On the other hand, say that $B$ fails to have finite support. In light of the work above, one need only consider the case where $B \in \rcfm F$. Consider the two sets $J = \{j \st B_{*j} \neq 0\}$ and $K = \{k \st B_{*k} \neq 0 \}$. Since it was assumed that $B$ has infinite support, at least one of the sets $J$ or $K$ must be infinite. Say $|J| = \infty$. (The proof of the case where $|K| = \infty$ will be symmetric.) 

Let $\{b_{jk_j}^1 \st j \in J\}$ be the first nonzero entry in each of the rows $B_{j*}$ for $j \in J$. Then define 
\[C = \sum_{j \in J} E_{{k_j}1}\] where $E_{ab}$ is the matrix unit with $1$ in the $(a,b)$ position and zeroes elsewhere. Since $B \in \rcfm F$, $BC$ is defined; moreover, the first column of $BC$ has infinitely many nonzero entries due to the construction of $C$. One need only take $A$ as in the previous two examples to see that $A(BC)$ is not defined.
\end{proof}

The next two results are folklore, but we will include proofs for completeness. Before those results we make the following definition.

\begin{definition}
Let $A$ be a row finite matrix, and let $A_{i*}$ be the $i$th row of $A$. The {\bf length} of $A_{i*}$ is the largest number $n_i \geq 0$ such that for every $j > n_i$, we have that $a_{ij} = 0$. A symmetric definition gives the length of the $j$th column of a column finite matrix $B$.
\end{definition}

\begin{proposition}\label{rstarc associative}
Let $A \in \rfm F$ and $C \in \cfm F$. Then $G_5(A,C) = M_\infty(F)$. 
\end{proposition}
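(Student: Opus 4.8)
The plan is to check, for an arbitrary $B \in M_\infty(F)$, the three conditions defining membership of $B$ in $G_5(A,C)$. Conditions (1) and (2) require no new work: since $A \in \rfm F$, Corollary \ref{rstar starc} gives that $AB$ and $A(BC)$ are defined (whatever matrix sits to the right of $A$), and since $C \in \cfm F$, the same corollary gives that $BC$ and $(AB)C$ are defined (whatever matrix sits to the left of $C$). So the entire content of the proposition is condition (3), the genuine equality of entries of $A(BC)$ and $(AB)C$.

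To prove (3) I would fix indices $i,k \in \zplus$ and compare the $(i,k)$ entries of the two products directly. Let $n_i$ be the length of the row $A_{i*}$, so $a_{ij} = 0$ for $j > n_i$, and let $m_k$ be the length of the column $C_{*k}$, so $c_{\ell k} = 0$ for $\ell > m_k$; these exist precisely because $A$ is row finite and $C$ is column finite. Expanding, the $(i,k)$ entry of $A(BC)$ equals $\sum_j a_{ij}(BC)_{jk} = \sum_{j=1}^{n_i} a_{ij}\bigl(\sum_{\ell=1}^{m_k} b_{j\ell}c_{\ell k}\bigr)$, while the $(i,k)$ entry of $(AB)C$ equals $\sum_\ell (AB)_{i\ell}c_{\ell k} = \sum_{\ell=1}^{m_k}\bigl(\sum_{j=1}^{n_i} a_{ij}b_{j\ell}\bigr)c_{\ell k}$. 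Both are finite double sums indexed by the same finite rectangle $\{1,\dots,n_i\}\times\{1,\dots,m_k\}$ with common summand $a_{ij}b_{j\ell}c_{\ell k}$, so they agree by the associativity and commutativity of addition in $F$.

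There is essentially no obstacle here beyond the bookkeeping: the one subtlety is ensuring that every sum in sight is honestly finite, so that rearranging the double sum is legitimate — this is exactly why one passes to the lengths $n_i$ and $m_k$ rather than manipulating the formal infinite sums of equation (\ref{matrix multiplication}) directly. It is worth flagging where each hypothesis is spent: row finiteness of $A$ does not make $AB$ row finite, nor does column finiteness of $C$ make $BC$ column finite, so condition (2) genuinely needs Corollary \ref{rstar starc} invoked on \emph{both} sides and cannot be deduced from (1) by soft nonsense. The role of the hypotheses also indicates why the statement is sharp: dropping either ``$A$ row finite'' or ``$C$ column finite'' reopens the door to the non-associativity already seen in Example \ref{multiplication not associative}.
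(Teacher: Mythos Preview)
Your proof is correct and follows essentially the same approach as the paper: invoke Corollary \ref{rstar starc} to get all four products defined, then fix an entry, truncate both double sums using the row length $n_i$ of $A$ and the column length of $C$, and observe that the resulting finite double sums over the same rectangle agree. The additional commentary on where each hypothesis is spent and on sharpness is accurate and goes a bit beyond what the paper records, but the mathematical argument is the same.
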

\begin{proof}
Note that, by Corollary \ref{rstar starc}, $(AB)C$ and $A(BC)$ are both defined for all $B \in M_\infty$. So consider the $(i,l)$th entry of both of those terms. Let $n_i$ be the length of the $i$th row of $A$ and $m_l$ be the length of the $l$th column of $C$. Then the $(i,l)$ entry of $A(BC)$ can be written as
\begin{equation*}
\begin{split} 
\left(A(BC)\right)_{il} = \sum_{j=1}^\infty a_{ij}\left(\sum_{k=1}^\infty b_{jk}c_{kl}\right) 
&= \sum_{j=1}^{n_i}  a_{ij}\left(\sum_{k=1}^{m_l} b_{jk}c_{kl}\right)\\
= \sum_{k=1}^{m_l} \left(\sum_{j=1}^{n_i} a_{ij}b_{jk}\right)c_{kl}
&= \left((AB)C\right)_{il}.
\end{split}
\end{equation*}
\end{proof}

\begin{corollary}\label{fsm associative}
If $B \in \fsm F$, then for any matrices $A$ and $C$, $A(BC) = (AB)C$. In particular,
\[\bigcap_{A,C \in M_\infty(F)} G_5(A,C) = \fsm F.\]
\end{corollary}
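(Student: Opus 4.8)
The plan is to prove the two halves of the claimed equality separately: first that every $B \in \fsm F$ is an associative link between any pair $A, C$, giving $\fsm F \subseteq \bigcap_{A,C} G_5(A,C)$, and then that the reverse inclusion is immediate from Proposition~\ref{cap G4 fsm}. The opening sentence of the corollary is exactly the first half specialized to the equality condition, so once that is in place the ``In particular'' clause is a two-line deduction.

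For the first half I would fix $A$ and $C$ and a finitely supported $B$, and choose $N \in \zplus$ so large that $b_{jk} = 0$ whenever $j > N$ or $k > N$. Since $B \in \rcfm F$, Corollary~\ref{rstar starc} already guarantees that $AB$ and $BC$ are defined. Before computing I would record two structural observations: $(BC)_{jl} = \sum_k b_{jk} c_{kl} = 0$ for all $j > N$, so $BC$ has nonzero entries only in its first $N$ rows; and symmetrically $AB$ has nonzero entries only in its first $N$ columns. Consequently each formal sum occurring in $A(BC)$ and in $(AB)C$ collapses to a sum over at most $N$ indices, so both triple products are defined. For equality of the $(i,l)$ entries I would then write
\[
\left(A(BC)\right)_{il} = \sum_{j=1}^{N} a_{ij}\left(\sum_{k=1}^{N} b_{jk} c_{kl}\right) = \sum_{j=1}^{N}\sum_{k=1}^{N} a_{ij} b_{jk} c_{kl} = \sum_{k=1}^{N}\left(\sum_{j=1}^{N} a_{ij} b_{jk}\right) c_{kl} = \left((AB)C\right)_{il},
\]
where the middle expression is a genuine finite double sum and may be reassociated freely. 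This is the same computation as in the proof of Proposition~\ref{rstarc associative}, with the finiteness of $\supp(B)$ playing the role that was there played by the lengths of the rows of $A$ and the columns of $C$. Hence $B \in G_5(A,C)$, and since $A$ and $C$ were arbitrary, $\fsm F \subseteq \bigcap_{A,C} G_5(A,C)$.

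For the reverse inclusion I would simply note that $G_5(A,C) \subseteq G_4(A,C)$ holds by construction of the families, so intersecting over all $A, C$ gives $\bigcap_{A,C} G_5(A,C) \subseteq \bigcap_{A,C} G_4(A,C) = \fsm F$ by Proposition~\ref{cap G4 fsm}; combining the two inclusions closes the argument. I do not expect a real obstacle here — the corollary is bookkeeping on top of results already established. The one point I would be careful to spell out is that $A(BC)$ and $(AB)C$ are themselves defined (not just $AB$ and $BC$), which is precisely why I would make the ``first $N$ rows / first $N$ columns'' observation about $BC$ and $AB$ explicit before doing the entrywise calculation.
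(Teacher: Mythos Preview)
Your argument is correct. The second half (the intersection statement) matches the paper verbatim. For the first half, the paper takes a slightly different packaging: it writes $B$ as a block matrix
\[
B = \left(\begin{array}{c|c} B' & 0 \\ \hline 0 & 0 \end{array}\right)
\]
with $B'$ an $n \times n$ block, partitions $A$ and $C$ conformally, and then observes that each block of $A(BC)$ has the form $A_s(B'C_t)$ with $A_s$ row finite and $C_t$ column finite, so Proposition~\ref{rstarc associative} applies block by block. You instead bound the indices directly by $N$ and redo the finite double-sum rearrangement from the proof of Proposition~\ref{rstarc associative} in place. The content is the same; the paper's version buys a cleaner appeal to an earlier result, while yours makes the definedness of $A(BC)$ and $(AB)C$ more visibly explicit (which you flagged as the point you wanted to be careful about).
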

\begin{proof}
To prove the first statement, note that $B \in \fsm F$; this means that one may write \[B = \left(\begin{array}{c|c}
B' &0\\ \hline
0 &0
\end{array}\right)\] where $B'$ is an $n \times n$ matrix. Divide the matrices $A$ and $C$ into block matrices similarly:
\[A = \left(\begin{array}{c|c}
A_1 &A_2\\ \hline
A_3 &A_4
\end{array}\right) \text{ and } C = \left(\begin{array}{c|c}
C_1 &C_2\\ \hline
C_3 &C_4
\end{array}\right).\]
So \[A(BC) = \left(\begin{array}{c|c}
A_1(B'C_1) &A_1(B'C_2)\\ \hline
A_3(B'C_1) &A_3(B'C_2)
\end{array}\right).\] As $A_1, A_3 \in \rfm F$ and $C_1,C_2 \in \cfm F$, Proposition \ref{rstarc associative} gives $A(BC) = (AB)C$.

Now, since $\fsm F \subseteq G_5(A,C) \subseteq G_4(A,C)$ for arbitrary matrices $A, C \in M_\infty(F)$, Proposition \ref{cap G4 fsm} immediately gives the second statement.
\end{proof}

Proposition \ref{rstarc associative} and Corollary \ref{fsm associative} are dual to each other in the sense that the first relies on finiteness in the outer matrices of the triple $(A,B,C)$ to achieve associativity while the second relies on finiteness in the center matrix $B$ to achieve associativity. In the next section, we will introduce machinery that will lead to an even more straightforward proof of Corollary \ref{fsm associative}

\section{Summability and Associativity.}
In the definitions of $G_i(A,C)$ for $i \in \{2,4,5\}$ in the previous section, our focus was on the matrices $A$ and $C$ and their influence on the associativity of infinite matrices. This section will explore the role of $B$ in the presence (or absence) of the associativity condition.

Recall Definition \ref{leftrightdefined} and Proposition \ref{defined leftrightdefined}. For fixed but arbitrary $A,C \in M_\infty(F)$, one can see that $B$ is a link if and only if for every $k \in \zplus$ the family $\{A_{*j}b_{jk} \st j \in \zplus\}$ is summable and for every $j \in \zplus$, $\{b_{jk}C_{k*} \st k \in \zplus\}$ is summable. With this and the goal of an associativity condition in mind, one may be tempted to conjecture that $A(BC) = (AB)C$ if and only if the family $\{A_{*j}b_{jk}C_{k*} \st j,k \in \zplus\}$ is summable. The following example shows that this condition fails to be sufficient for associativity.

\begin{example} \label{(D) not sufficient}
Let \[A = \begin{pmatrix}
0 &1 &1 &\cdots\\
0 &0 &0 &\cdots\\
0 &0 &0 &\cdots\\
\vdots &\vdots &\vdots &\ddots
\end{pmatrix}\text{, } B= \begin{pmatrix}
1 &1 &1 &\cdots\\
1 &1 &0 &\cdots\\
1 &0 &0 &\cdots\\
\vdots &\vdots &\vdots &\ddots 
\end{pmatrix} \text{, and } C = \begin{pmatrix}
0 &0 &0 &\cdots\\
1 &0 &0 &\cdots\\
1 &0 &0 &\cdots\\
\vdots &\vdots &\vdots &\ddots 
\end{pmatrix}.\]

Clearly $AB$ and $BC$ do not exist, so $(A,B,C)$ fails to be an associative triple. What is interesting is that the family $\{A_{*j}b_{jk}C_{k*} \st j,k \in \zplus\}$ is summable. When considering this family of matrices we may think of the entry $b_{jk}$ of $B$ as a ``connector" of sorts which multiplies the $j$th column of $A$ by the $k$th row of $C$ and then multiplies the resulting matrix by some scalar from the field. In order to check the summability of this family, we only need to multiply the columns and rows indicated by the nonzero entries of $B$. These nonzero entries fall into three (not disjoint) families: the first row of $B$, $\{b_{1k} \st k \in \zplus\}$; the first column of $B$, $\{b_{j1} \st j \in \zplus\}$; and the singleton, $\{b_{22}\}$.

For $b_{1k}$ taken from the first row of $B$, the family $\{A_{*1}b_{1k}C_{k*} \st k \in \zplus\} = \{0\}$ since the first column of $A$ is zero. Because the first row of $C$ is also zero, the family generated by any entry taken from the first column of $B$ will be the set containing the singleton zero. Consider $b_{22}$; then 

\[A_{*2}b_{22}C_{2*} = \begin{pmatrix} 1\\ 0\\ 0\\ \vdots \end{pmatrix} \cdot \begin{pmatrix} 1 &0 &0 &\cdots \end{pmatrix} = \begin{pmatrix}1 &0 &0 &\cdots\\ 0 &0 &0 &\cdots\\ 0 &0 &0 &\cdots\\ \vdots &\vdots &\vdots &\ddots \end{pmatrix},\] which is consequently the only nonzero element of the family $\{A_{*j}b_{jk}C_{k*} \st j,k \in \zplus\}$, so the family must be summable.
\end{example}

On its own, the requirement that $\{A_{*j}b_{jk}C_{k*} \st j,k \in \zplus\}$ be a summable family of matrices seems to have nothing to do with the existence of the associative triple $(A,B,C)$; it cannot even assure that $AB$ and $BC$ are defined. (From here on we will refer to this summability requirement as \textbf{condition $\mathbf{(D)}$}.) This condition is nonetheless intertwined with associativity, as the following proposition shows.

\begin{proposition}\label{associative implies AB BC (D)}
If $A(BC) = (AB)C$, then $\{A_{*j}b_{jk}C_{k*} \st j,k \in \zplus\}$ is a summable family of matrices.
\end{proposition}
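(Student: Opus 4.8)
The plan is to prove the contrapositive: assume condition $(D)$ fails, that is, the family $\{A_{*j}b_{jk}C_{k*} \st j,k \in \zplus\}$ is \emph{not} summable, and conclude that the associativity identity $A(BC) = (AB)C$ cannot hold (interpreting this, per the convention in the definition of an associative triple, as asserting in particular that both sides are defined and equal). Since $(D)$ fails, there is a fixed position $(i,l)$ such that the set $S = \{(j,k) \st a_{ij}b_{jk}c_{kl} \neq 0\}$ is infinite. Everything will take place at this single entry, so I would fix $i$ and $l$ once and for all and look at the $(i,l)$ entries of the two triple products.

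The heart of the argument is a dichotomy on how the infinite set $S$ of ``active'' index pairs $(j,k)$ sits inside $\zplus \times \zplus$. Either (a) $S$ meets infinitely many distinct values of $j$, or (b) $S$ is contained in finitely many rows $j = j_1, \ldots, j_r$, in which case, since $S$ is infinite, at least one such row $j_0$ has $\{k \st (j_0,k) \in S\}$ infinite. In case (a), for each of infinitely many $j$ in the projection of $S$ there is at least one $k$ with $a_{ij}b_{jk}c_{kl} \neq 0$; in particular $a_{ij} \neq 0$ and $\sum_k b_{jk}c_{kl} \neq 0$ is a sum containing a nonzero term — but here I must be careful, because I want to conclude the inner sum defining $(BC)_{jl}$ is a genuine finite nonzero sum, not merely that it has a nonzero summand. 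This is where I would invoke that the statement $A(BC)=(AB)C$ presupposes $A(BC)$ is defined, hence $BC$ is defined, so each $(BC)_{jl} = \sum_k b_{jk}c_{kl}$ is a finite sum; but finiteness of each such sum does not prevent cancellation, so I cannot immediately say $(BC)_{jl}\neq 0$. The clean way around this is to observe that if $A(BC)$ is defined then the family $\{a_{ij}(BC)_{jl} \st j\}$ has finite support, i.e. $a_{ij}(BC)_{jl} = 0$ for all but finitely many $j$; similarly $(AB)_{ik}c_{kl}=0$ for all but finitely many $k$ if $(AB)C$ is defined. So outside a finite set of ``bad'' indices, the row-level and column-level partial sums vanish, and the infinitely many nonzero terms $a_{ij}b_{jk}c_{kl}$ with $(j,k)\in S$ must, at each fixed $j$ (resp.\ each fixed $k$), sum to zero after multiplying by the outer factor. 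I would then extract from $S$ an infinite ``staircase'' or injatevely-indexed subfamily of index pairs witnessing that no finite truncation of the double array $(a_{ij}b_{jk}c_{kl})_{j,k}$ can be rearranged into the two iterated sums consistently — producing an explicit discrepancy between $\sum_j a_{ij}(\sum_k b_{jk}c_{kl})$ and $\sum_k (\sum_j a_{ij}b_{jk})c_{kl}$.

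Concretely, I expect the cleanest route is: since $S$ is infinite, choose an infinite sequence of \emph{distinct} pairs $(j_1,k_1),(j_2,k_2),\ldots$ in $S$ with the $j_m$ (or the $k_m$) strictly increasing, using the dichotomy above to know one of the two coordinates can be taken unbounded. If the products $A(BC)$ and $(AB)C$ are defined, then for all large $m$ the outer factors force $a_{ij_m}(BC)_{j_m l}=0$ and $(AB)_{ik_m}c_{k_m l}=0$; combining these with the fact that $a_{ij_m}b_{j_m k_m}c_{k_m l}\neq 0$ yields, for each large $m$, a second index contributing to the same row $j_m$ or column $k_m$, and iterating this bookkeeping shows the support of the double array is ``non-rectangular'' in a way incompatible with both iterated sums being finite and equal — the contradiction. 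The main obstacle, and the step I would spend the most care on, is exactly this: ruling out that internal cancellations make both $A(BC)$ and $(AB)C$ defined (so each iterated sum is a finite sum of possibly-canceling terms) while still $S$ is infinite; I suspect the resolution is that if $BC$, $AB$, $A(BC)$, $(AB)C$ are all defined then in fact condition $(D)$ must hold, so the only scenario with $(D)$ failing is that one of these four products is \emph{undefined}, which already contradicts the meaning of $A(BC)=(AB)C$. If that is right, the proof reduces to showing: $(D)$ fails $\Rightarrow$ at least one of $AB$, $BC$, $A(BC)$, $(AB)C$ is not defined, which is a finite-vs-infinite-support argument at the fixed entry $(i,l)$ and should go through by the same dichotomy with far less pain.
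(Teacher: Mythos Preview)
Your overall strategy parallels the paper's: argue by contraposition, fix an entry $(i,l)$ where condition $(D)$ fails, use the definedness of $BC$ to make each inner sum $\sum_k b_{jk}c_{kl}$ finite, and then try to conclude that the outer sum over $j$ cannot be truncated, so that $A(BC)$ is undefined. The paper does exactly this in a few lines, asserting without further justification that ``because condition $(D)$ fails at the $(i,l)$ entry, there cannot exist any number $n$'' truncating the sum.

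You have located the weak point exactly, but the resolution you suspect --- that definedness of all four products $AB$, $BC$, $A(BC)$, $(AB)C$ forces condition $(D)$ --- is false. Take $A$ with first row $(1,1,1,\ldots)$ and all other rows zero, $C$ with first column $(1,1,1,\ldots)^T$ and all other columns zero, and $B$ block-diagonal with $2\times 2$ blocks $\left(\begin{smallmatrix} 1 & -1 \\ -1 & 1\end{smallmatrix}\right)$ down the diagonal. Then $B\in\rcfm F$, every row and every column of $B$ sums to zero, so $AB=0$ and $BC=0$; hence $A(BC)=(AB)C=0$ with all four products defined and equal. Yet at the $(1,1)$ entry the support set is $\{(j,k):b_{jk}\neq 0\}$, which is infinite, so condition $(D)$ fails. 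Thus $(A,B,C)$ is an associative triple for which $(D)$ does not hold. The paper's step from ``$(D)$ fails at $(i,l)$'' to ``$A(BC)$ is not defined'' silently conflates ``infinitely many individual terms $a_{ij}b_{jk}c_{kl}$ are nonzero'' with ``infinitely many inner sums $a_{ij}\sum_k b_{jk}c_{kl}$ are nonzero''; your instinct to isolate cancellation as the main obstacle was correct, and the obstacle is in fact fatal to the statement as written, not merely to this line of argument.
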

\begin{proof}
Say, in anticipation of a contradiction, $A(BC) = (AB)C$, but that condition (D) fails to be satisfied. Then, in particular, $BC$ is defined, and from there we will then find a contradiction in the definedness of $A(BC)$.

With that in mind, say that condition (D) fails for the $(i,l)$ entry of the family of matrices. Consider the formal expression of that entry of $A(BC)$:
\[(A(BC))_{il} = \sum_{j=1}^\infty a_{ij}\left(\sum_{k=1}^\infty b_{jk}c_{kl}\right).\] Since $BC$ is defined, then for every $j \in \zplus$ there exists a smallest number $m_j$ such that 
\[\sum_{j=1}^\infty a_{ij}\left(\sum_{k=1}^\infty b_{jk}c_{kl}\right) = \sum_{j=1}^\infty a_{ij}\left(\sum_{k=1}^{m_j} b_{jk}c_{kl}\right) = \sum_{j=1}^\infty \sum_{k=1}^{m_j} a_{ij}b_{jk}c_{kl}.\]
Because condition (D) fails at the $(i,l)$ entry, there cannot exist any number $n \in \zplus$ such that 
\[\sum_{j=1}^\infty \sum_{k=1}^{m_j} a_{ij}b_{jk}c_{kl} = \sum_{j=1}^n \sum_{k=1}^{m_j} a_{ij}b_{jk}c_{kl},\] which in turn means that $A(BC)$ is not defined.  This is a contradiction, so associativity implies condition (D).  
\end{proof}

The next proposition will show that, with an additional assumption, the converse to the above proposition also holds.

\begin{proposition} \label{(D) AB BC implies G5}
If $AB$ and $BC$ are both defined and $\{A_{*j}b_{jk}C_{k*} \st j,k \in \zplus\}$ is summable, then $A(BC) = (AB)C$. 
\end{proposition}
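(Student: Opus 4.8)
The plan is to fix an arbitrary entry position $(i,l)$ and show that both $\bigl(A(BC)\bigr)_{il}$ and $\bigl((AB)C\bigr)_{il}$ reduce to the same finite double sum $\sum_{(j,k)\in S} a_{ij}b_{jk}c_{kl}$, where $S$ is the (finite) support at $(i,l)$ of the summable family in condition (D). The point is that condition (D) tells us exactly that the set $S = S_{il} = \{(j,k) : a_{ij}b_{jk}c_{kl}\neq 0\}$ is finite, so this double sum is unambiguously defined and independent of the order of summation. What remains is to verify that reassociating $A(BC)$ and $(AB)C$ each collapses down to this same sum.

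First I would compute $\bigl(A(BC)\bigr)_{il}$. Since $BC$ is defined, for each $j$ the inner formal sum $\sum_{k} b_{jk}c_{kl}$ has finite support; call its set of nonzero indices $K_j$, so $(BC)_{jl} = \sum_{k\in K_j} b_{jk}c_{kl}$. Then $\bigl(A(BC)\bigr)_{il} = \sum_j a_{ij}\,(BC)_{jl}$, and this outer sum must itself be a finite sum for $A(BC)$ to be defined --- but I should not assume $A(BC)$ is defined a priori; rather I would argue directly that only finitely many $j$ contribute. Indeed, if $a_{ij}(BC)_{jl}\neq 0$ then $a_{ij}\neq 0$ and $b_{jk}c_{kl}\neq 0$ for some $k\in K_j$, hence $(j,k)\in S$ for some $k$; since $S$ is finite there are only finitely many such $j$. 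Thus $A(BC)$ is defined at $(i,l)$ and
\[
\bigl(A(BC)\bigr)_{il} \;=\; \sum_{j}\sum_{k\in K_j} a_{ij}b_{jk}c_{kl} \;=\; \sum_{(j,k)\in S} a_{ij}b_{jk}c_{kl},
\]
the last equality because every term with $(j,k)\notin S$ vanishes and every $(j,k)\in S$ has $k\in K_j$ (as $b_{jk}c_{kl}\neq 0$ forces $c_{kl}\neq 0$, so $k$ is among the nonzero indices of the $j$-th inner sum). A symmetric argument, using that $AB$ is defined so that for each $k$ the sum $(AB)_{ik}=\sum_{j\in J_k} a_{ij}b_{jk}$ is finite, shows $\bigl((AB)C\bigr)_{il} = \sum_{(j,k)\in S} a_{ij}b_{jk}c_{kl}$ as well. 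Equating the two gives $\bigl(A(BC)\bigr)_{il} = \bigl((AB)C\bigr)_{il}$ for every $(i,l)$, which is the claim.

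The main obstacle --- really a bookkeeping subtlety rather than a deep difficulty --- is handling the interplay between ``defined'' and the rearrangement of the double sum cleanly: one must be careful that the finite index sets $K_j$ (from $BC$ being defined) and $J_k$ (from $AB$ being defined) need not coincide with the slice of $S$, since $K_j$ may contain indices $k$ with $c_{kl}\neq 0$ but $a_{ij}b_{jk}=0$, etc. The resolution is to observe that adding or removing terms that are zero does not change a finite sum, so after restricting to $S$ all these auxiliary index sets become irrelevant; the only genuine input needed is the finiteness of $S$, which is precisely condition (D) at $(i,l)$. It is also worth noting explicitly that this argument reproves that $A(BC)$ and $(AB)C$ are both defined under the hypotheses, so the full conclusion $B\in G_5(A,C)$ follows, giving the promised slicker route to Corollary \ref{fsm associative}.
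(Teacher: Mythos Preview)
Your argument is correct and is organized around the same fixed-entry reduction as the paper's proof, but the execution is genuinely different. The paper, after using the definedness of $BC$ and condition~(D) to produce bounds $n,m$ for $(A(BC))_{il}$ and symmetrically $n',m'$ for $((AB)C)_{il}$, runs an eight-case comparison of these four integers to force the two finite double sums to agree. You instead isolate the single finite set $S=\{(j,k):a_{ij}b_{jk}c_{kl}\neq 0\}$ and show directly that both iterated sums collapse to $\sum_{(j,k)\in S}a_{ij}b_{jk}c_{kl}$, which sidesteps the case analysis entirely and makes the role of condition~(D) transparent. Your route also yields the definedness of $A(BC)$ and $(AB)C$ as a byproduct rather than leaving it implicit. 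One small point worth tightening: when you pass from $\sum_{j}\sum_{k\in K_j}a_{ij}b_{jk}c_{kl}$ to $\sum_{(j,k)\in S}$, the outer sum should be read as ranging over the finite projection $\{j:\exists k,\ (j,k)\in S\}$ rather than over $\{j:a_{ij}(BC)_{jl}\neq 0\}$, since cancellation in $(BC)_{jl}$ could in principle drop a $j$ from the latter set while $(j,k)\in S$ for some $k$; with that reading your justification (``terms outside $S$ vanish, and every $(j,k)\in S$ has $k\in K_j$'') is complete.
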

\begin{proof}
Consider the $(i,l)$ entry of $A(BC)$ and $(AB)C$. If for each choice of $i,l \in \zplus$, they are equal, then we have associativity.

Consider the formal expression of the $(i,l)$ entry of $A(BC)$. Then since $BC$ is defined, for each $j \in \zplus$ there exists a smallest number $m_j$ such that the following equality holds.
\[(A(BC))_{il} = \sum_{j=1}^\infty a_{ij}\left(\sum_{k=1}^\infty b_{jk}c_{kl}\right) = \sum_{j=1}^\infty a_{ij}\left(\sum_{k=1}^{m_j} b_{jk}c_{kl}\right) = \sum_{j=1}^\infty \sum_{k=1}^{m_j} a_{ij}b_{jk}c_{kl}.\] Now, due to the summability of $\{A_{*j}b_{jk}C_{k*} \st j,k \in \zplus\}$, each entry has only finite support, so there must exist a smallest number $n$ such that 
\[\sum_{j=1}^\infty \sum_{k=1}^{m_j} a_{ij}b_{jk}c_{kl} = \sum_{j=1}^n \sum_{k=1}^{m_j} a_{ij}b_{jk}c_{kl}.\] From here one may define $m = \max\{m_j \st 1 \leq j \leq n\}$, giving the following equality:
\[\sum_{j=1}^n \sum_{k=1}^{m_j} a_{ij}b_{jk}c_{kl} = \sum_{j=1}^n \sum_{k=1}^m a_{ij}b_{jk}c_{kl}.\]
Through a similar process, one may write the $(i,l)$ entry of $(AB)C$ as 
\begin{equation*}
\begin{split}
((AB)C)_{il} &= \sum_{k=1}^\infty \left(\sum_{j=1}^\infty a_{ij}b_{jk}\right)c_{kl} 
= \sum_{k=1}^{m'}\sum_{j=1}^{n_k'} a_{ij}b_{jk}c_{kl}\\ 
&= \sum_{k=1}^{m'}\sum_{j=1}^{n'} a_{ij}b_{jk}c_{kl}
= \sum_{j=1}^{n'} \sum_{k=1}^{m'} a_{ij}b_{jk}c_{kl}
\end{split}
\end{equation*} 
where the choice of the $n_k'$ is due to the definedness of $AB$, the choice of $m'$ is due to condition $(D)$, and $n' = \max\{n_k \st 1 \leq k \leq m'\}$. These constants, $n$, $m$, $n'$, and $m'$ which were defined in the previous argument depend only on $i$ and $l$, the two indices which determine the entry of the product. If we can show that $n = n'$ and $m = m'$, then equality follows immediately by the commutativity of finite addition.

We claim that $n = n'$ and $m = m'$, giving
\begin{equation}\label{associative equality}\sum_{j=1}^n \sum_{k=1}^m a_{ij}b_{jk}c_{kl} = \sum_{j=1}^{n'} \sum_{k=1}^{m'} a_{ij}b_{jk}c_{kl}.\end{equation} Say, in anticipation of a contradiction, that equation (\ref{associative equality}) does not hold, so one of following eight cases must occur.\[\begin{array}{c c c c c}
1.\ n' > n \text{ and } m'= m & & & &5.\ n> n'\text{ and }  m = m'\\
2.\ n' = n\text{ and } m' > m & & & &6.\ n=n' \text{ and } m > m'\\
3.\ n' > n\text{ and } m' > m & & & &7.\ n>n' \text{ and } m> m'\\
4.\ n > n'\text{ and } m' > m & & & &8.\ n'>n \text{ and } m>m'\\
\end{array}\] We will check cases (1), (2), and (4). The proof for (3) is similar to proofs for (1) and (2), and the proofs (5) through (8) are symmetric to the proofs of the first four cases.

In the first case, note that 
\[\sum_{j=1}^n \sum_{k=1}^m a_{ij}b_{jk}c_{kl} + \sum_{j=n+1}^{n'} \sum_{k=1}^m a_{ij}b_{jk}c_{kl} = \sum_{j=1}^{n'} \sum_{k=1}^{m'}a_{ij}b_{jk}c_{kl},\] but then note that by construction $a_{ij}b_{jk} = 0$ for all $j > n$. Thus equation (\ref{associative equality}) holds since the second summand is zero. Something similar happens in the second case, where it is assumed that $n' = n$ and $m' > m$. Write
\[\sum_{j=1}^{n'} \sum_{k=1}^{m'} a_{ij}b_{jk}c_{kl} + \sum_{j=1}^{n'} \sum_{k=m' + 1}^{m}a_{ij}b_{jk}c_{kl} = \sum_{j=1}^n \sum_{k=1}^m a_{ij}b_{jk}c_{kl},\] and note that, for all $k> m'$, we have $b_{jk}c_{kl} = 0$, giving equation (\ref{associative equality}) again. The third case is clear from the two previous cases.

Now consider the fourth case, where $n > n'$ and $m' > m$. Consider the following two expansions
\begin{equation}\label{interleave compare}
\begin{split}
\sum_{j=1}^{n'} \sum_{k=1}^{m'} a_{ij}b_{jk}c_{kl} + \sum_{j=n'+1}^n \sum_{k=1}^{m'} a_{ij}b_{jk}c_{kl} &= \sum_{j=1}^n \sum_{k=1}^{m'}a_{ij}b_{jk}c_{kl}\\
\sum_{j=1}^{n} \sum_{k=1}^{m} a_{ij}b_{jk}c_{kl} + \sum_{j=1}^n \sum_{k=m+1}^{m'} a_{ij}b_{jk}c_{kl} &= \sum_{j=1}^n \sum_{k=1}^{m'}a_{ij}b_{jk}c_{kl}
\end{split}
\end{equation}
First note that the two equations above are equal to each other. Additionally, for each choice of $j>n'$, it follows that $a_{ij}b_{jk} = 0$, and for each $k > m$, we have $b_{jk}c_{kl} = 0$ by the construction above. Thus the second summands in both equations of (\ref{interleave compare}) are both zero, giving the desired equality.
\end{proof}

\begin{remark}
It is interesting to see how condition (D) actually works in the above context. The example at the beginning of this section shows that, on its own, condition (D) cannot assure that multiplication is even defined. Condition (D) works best in conjunction with other assumptions. For example, let 
\[\sum_{j=1}^\infty a_{ij}\left(\sum_{k=1}^\infty b_{jk}c_{kl}\right)\] be the formal sum that accounts for all of the elements contributing to the $(i,l)$ entry of $A(BC)$. In both of the arguments we were able to assure that in the double sum at least one of the sums is finite. For instance, in the above example $BC$ being defined would imply that for each $j \in \zplus$ there exists $m_j$ to turn the double infinite sums into an infinite collection of finite sums. Then condition (D) assures that the collection of elements $\{a_{ij}b_{jk}c_{kl} \st j,k \in \zplus\}$ must have only finitely many nonzero elements, limiting the second sum to a finite number of entries.
\end{remark}
The previous two results combine to give sufficient and necessary conditions for associativity.
\begin{theorem}
Let $A$, $B$, and $C \in M_\infty(F)$. Then $A(BC) = (AB)C$ if and only if $AB$ and $BC$ are defined and the family $\{A_{*j}b_{jk}C_{k*} \st j,k \in \zplus\}$ is summable.
\end{theorem}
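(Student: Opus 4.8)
The plan is to read this theorem off directly from the two preceding propositions, together with the convention recorded in the remark following the definition of an associative triple: the bare assertion ``$A(BC) = (AB)C$'' already presupposes that all of $AB$, $BC$, $A(BC)$, and $(AB)C$ are defined. So the only real work is to assemble Propositions \ref{associative implies AB BC (D)} and \ref{(D) AB BC implies G5} and to make that convention explicit.

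First I would treat the forward direction. Assuming $A(BC) = (AB)C$, the convention just recalled tells us that $AB$ and $BC$ are defined, which is the first clause of the right-hand side of the biconditional. For the remaining clause, Proposition \ref{associative implies AB BC (D)} applies without modification and yields that $\{A_{*j}b_{jk}C_{k*} \st j,k \in \zplus\}$ is summable, i.e.\ that condition (D) holds.

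For the converse I would assume $AB$ and $BC$ are defined and that $\{A_{*j}b_{jk}C_{k*} \st j,k \in \zplus\}$ is summable. Then Proposition \ref{(D) AB BC implies G5} applies verbatim and gives $A(BC) = (AB)C$; note that the proof of that proposition also produces the definedness of $A(BC)$ and $(AB)C$ along the way, so nothing is missing.

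Consequently the argument requires no new computation. The one point that deserves care — and it is the closest thing to an obstacle — is being explicit about the definedness convention, so that ``$AB$ and $BC$ are defined'' is genuinely recovered from the hypothesis $A(BC)=(AB)C$ rather than tacitly appended to it. I would also remark afterward that this characterization furnishes the promised streamlined reproof of Corollary \ref{fsm associative}: when $B \in \fsm F$ the family $\{A_{*j}b_{jk}C_{k*} \st j,k \in \zplus\}$ has only finitely many nonzero members, so condition (D) is automatic and, with $AB$, $BC$ defined by Corollary \ref{rstar starc}, the theorem immediately gives associativity.
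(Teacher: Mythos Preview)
Your proposal is correct and mirrors the paper's approach exactly: the paper presents the theorem immediately after Propositions \ref{associative implies AB BC (D)} and \ref{(D) AB BC implies G5} with the sentence ``The previous two results combine to give sufficient and necessary conditions for associativity,'' offering no separate proof. Your explicit invocation of the definedness convention and the subsequent remark about Corollary \ref{fsm associative} are likewise present in the paper in the same places.
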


The first consequence of the theorem gives an alternative proof of Corollary \ref{fsm associative}.

\begin{corollary}
Let $B \in \fsm F$. Then for any matrices $A$ and $C$ in $M_\infty(F)$, $A(BC) = (AB)C$.
\end{corollary}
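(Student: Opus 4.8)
The plan is to derive this immediately from the preceding theorem, so the work reduces to verifying its two hypotheses when $B \in \fsm F$. First I would invoke Corollary \ref{rstar starc}: since $B$ has finite support it is in particular in $\rcfm F$, so $AB$ and $BC$ are defined for any $A, C \in M_\infty(F)$. That disposes of the first hypothesis with no real effort. The remaining task is to check condition $(D)$, namely that the family $\{A_{*j}b_{jk}C_{k*} \st j,k \in \zplus\}$ is summable.

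For condition $(D)$, the key observation is that $b_{jk} \neq 0$ for only finitely many pairs $(j,k)$, since $B$ has only finitely many nonzero entries. Hence the family $\{A_{*j}b_{jk}C_{k*} \st j,k \in \zplus\}$ has only finitely many nonzero members (at most one for each nonzero entry of $B$). A family with finitely many nonzero members is automatically summable: for each fixed matrix position $(i,l)$, the support $\{(j,k) \st (A_{*j}b_{jk}C_{k*})_{il} \neq 0\}$ is contained in the finite set $\{(j,k) \st b_{jk} \neq 0\}$ and is therefore finite. With both hypotheses of the theorem verified, the theorem yields $A(BC) = (AB)C$.

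There is essentially no obstacle here — this corollary is the degenerate case where finiteness of the ``connector'' matrix $B$ makes every bookkeeping concern trivial, which is precisely the point the authors flagged when they promised ``an even more straightforward proof of Corollary \ref{fsm associative}.'' If I wanted to spell it out slightly more, I would note that this argument also silently re-proves the first statement of Corollary \ref{fsm associative} without the block-matrix decomposition used there, since the theorem's conclusion $A(BC)=(AB)C$ is exactly that statement. The only thing to be careful about is not to overcomplicate: one should resist re-deriving summability from scratch and instead lean on the already-established equivalences (Proposition \ref{defined leftrightdefined}) and on the theorem itself.
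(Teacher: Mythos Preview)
Your proof is correct and matches the paper's approach exactly: verify that $AB$ and $BC$ are defined (via $\fsm F \subseteq \rcfm F$ and Corollary~\ref{rstar starc}), observe that condition~(D) is automatic because $B$ has only finitely many nonzero entries, and invoke the theorem. The paper's proof is just a two-line compression of what you wrote.
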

\begin{proof}
Since $B \in \fsm F$, $AB$ and $BC$ are defined; moreover, condition (D) holds since $B$ has finitely many nonzero entries. Thus $(A,B,C)$ forms an associative triple of matrices by the theorem.
\end{proof}

The next two results show that given three matrices $A$, $B$, and $C$, so long as two of matrices are sufficiently well-behaved, associativity follows. The second is especially interesting, since it shows how the theorem gives insight into the problem mentioned in the Introduction.

\begin{corollary}
Let $A,B \in \rfm F$. Then for any $C \in M_\infty(F)$, $A(BC) = (AB)C$. Let $B,C \in \cfm F$. Then for any $A \in M_\infty(F)$, $A(BC) = (AB)C$.
\end{corollary}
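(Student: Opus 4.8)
The plan is to deduce both halves of the corollary from the Theorem by verifying its two hypotheses: that $AB$ and $BC$ are defined, and that condition (D) holds for the family $\{A_{*j}b_{jk}C_{k*} \st j,k \in \zplus\}$. I will treat the first statement ($A,B \in \rfm F$, arbitrary $C$) in detail; the second statement ($B,C \in \cfm F$, arbitrary $A$) is symmetric, interchanging the roles of rows and columns, so I would only remark that the argument is dual.

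First I would check definedness. Since $B \in \rfm F$ and $A$ is arbitrary, Corollary \ref{rstar starc} gives that $AB$ is defined; moreover $AB \in \rfm F$ since a product of two row finite matrices is row finite (the $i$th row of $AB$ is a finite linear combination of rows of $B$, each of finite length). For $BC$, again $B \in \rfm F$ and $C$ is arbitrary, so Corollary \ref{rstar starc} gives that $BC$ is defined. Hence hypothesis (1) of the Theorem is met.

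The substantive step is verifying condition (D): for each fixed $(i,l)$, the set $\{(j,k) \st a_{ij}b_{jk}c_{kl} \neq 0\}$ must be finite. Here I would use row finiteness twice. Fix $i$ and $l$. Since $A \in \rfm F$, the $i$th row has finite length $n_i$, so $a_{ij} = 0$ for all $j > n_i$; thus any nonzero term $a_{ij}b_{jk}c_{kl}$ has $j \leq n_i$, leaving only finitely many values of $j$. For each such $j$, row finiteness of $B$ gives that the $j$th row of $B$ has finite length, so $b_{jk} = 0$ for all $k$ beyond that length; hence for each of the finitely many admissible $j$ there are only finitely many admissible $k$. The union over the finitely many $j \leq n_i$ of these finite $k$-sets is finite, so the support of the $(i,l)$ entry of the family is finite. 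This holds for every $(i,l)$, so condition (D) is satisfied, and the Theorem yields $A(BC) = (AB)C$.

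I do not anticipate a genuine obstacle here — the result is a clean specialization of the Theorem, and the only thing to be careful about is bookkeeping the two successive finiteness reductions (first cutting down $j$ via the length of row $i$ of $A$, then cutting down $k$ via the length of row $j$ of $B$) in the right order. For the second statement one performs the dual reduction: fixing $(i,l)$, column finiteness of $C$ bounds the admissible $k$ by the length of its $l$th column, and then column finiteness of $B$ bounds the admissible $j$ for each such $k$ by the length of its $k$th column; definedness of $AB$ and $BC$ follows again from Corollary \ref{rstar starc}. Thus both assertions follow from the Theorem.
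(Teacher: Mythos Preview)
Your argument is essentially the paper's: invoke the Theorem, then verify condition (D) by first bounding $j$ via the length $n_i$ of the $i$th row of $A$ and then, for each $j \leq n_i$, bounding $k$ via row finiteness of $B$; the dual argument handles the second claim. One small slip to fix: you justify that $AB$ is defined by saying ``$B \in \rfm F$ and $A$ is arbitrary,'' but Corollary \ref{rstar starc} gives definedness when the \emph{left} factor is row finite (or the right factor is column finite), not when the right factor is row finite---the correct reason here is that $A \in \rfm F$, which you do have by hypothesis.
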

\begin{proof}
In light of the above theorem, one need only show that condition (D) holds. Say that the length of the $i$th row is $n$ (we may do this as $A$ is row finite); then for every $j > n$, $a_{ij}b_jk = 0$. Note, however, that for every $j \leq n$ the rows $B_{j*}$ have finite support by assumption. Thus for $j\leq n$ there are finitely many $b_{jk} \neq 0$. So this means that the entry has finite support and the family is summable. The proof of the second claim follows from a symmetric argument.
\end{proof}

\begin{proposition}
If $BC$ is defined and $A$ is row finite, then $A(BC) = (AB)C$.
\end{proposition}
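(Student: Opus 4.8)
The plan is to invoke the characterization of associativity just established in the theorem: it suffices to verify that $AB$ and $BC$ are defined and that condition (D) holds for the triple $(A,B,C)$. Definedness of $BC$ is one of the hypotheses, and since $A \in \rfm F$, Corollary \ref{rstar starc} immediately gives that $AB$ is defined. So the only substantive task is to check condition (D), namely that the family $\{A_{*j}b_{jk}C_{k*} \st j,k \in \zplus\}$ is summable.

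To do this, I would fix an entry position $(i,l)$ and show that the set $S_{il} = \{(j,k) \st a_{ij}b_{jk}c_{kl} \neq 0\}$ is finite. Since $A$ is row finite, the $i$th row $A_{i*}$ has some finite length $n_i$, so $a_{ij} = 0$ whenever $j > n_i$; hence every pair in $S_{il}$ has first coordinate $j \leq n_i$. For each of these finitely many values of $j$, the definedness of $BC$, applied to its $(j,l)$ entry (Definition \ref{defined}), says precisely that $\{k \st b_{jk}c_{kl} \neq 0\}$ is finite. Therefore $S_{il} \subseteq \bigcup_{j=1}^{n_i} \left( \{j\} \times \{k \st b_{jk}c_{kl} \neq 0\} \right)$, a finite union of finite sets, so $S_{il}$ is finite. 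This establishes condition (D), and the theorem then yields $A(BC) = (AB)C$.

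The point to watch is the interplay between the two hypotheses rather than any single hard step: row-finiteness of $A$ bounds the outer index $j$ by the length $n_i$, and only after that does definedness of $BC$ control, for each such $j$, the inner index $k$. Neither hypothesis alone would suffice — without row-finiteness of $A$ there could be infinitely many contributing $j$, and without definedness of $BC$ the inner sets could be infinite — so the argument is essentially the observation that these two finiteness conditions combine to make $S_{il}$ finite. Once condition (D) is in hand the conclusion is immediate, so there is no separate obstacle. (As a sanity check, this also recovers the earlier corollary for $A,B \in \rfm F$, since $B$ row finite already forces $BC$ to be defined by Corollary \ref{rstar starc}.)
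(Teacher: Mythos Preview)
Your proof is correct and follows essentially the same route as the paper: reduce to condition (D) via the theorem, note $AB$ is defined by row-finiteness of $A$, then bound the support of the $(i,l)$ entry first in $j$ (using row-finiteness) and then in $k$ for each such $j$ (using definedness of $BC$). The paper's argument is the same, just slightly terser.
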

\begin{proof}
Note that $A$ is row finite, so $AB$ is defined. In light of the theorem, all that remains to check is that condition (D) is satisfied. Consider the support of the $(i,l)$ entry, and let $I = \{j,k \in \zplus \st a_{ij}b_{jk}c_{kl} \neq 0\}$. Since $BC$ is defined, the set $\{k \st b_{jk}c_{kl} \neq 0\}$ is finite for every choice of $j \in \zplus$. Since $A$ is row finite, $\{j \st a_{ij} \neq 0\}$ is finite, thus the set $I$ is finite and our family of matrices is summable.
\end{proof}

To conclude, let us apply the proposition to the problem mentioned in the introduction. Recall that given any system of linear equations
\[Av = b,\] we wish to transform the matrix $A$, through a matrix of row operations $U$, into a form which easily begets a solution. Say $(UA)v = Ub$ has a solution in the vector $\bar a$. Given an inverse matrix $V$ for $U$, there is no guarantee that the solution $\bar a$ will be a solution to the original system of linear equations since $(V,U,A)$ need not be an associative triple of matrices. However, if $UA$ is defined and $U$ has a row finite inverse $V$, then if $\bar a$ is a solution to $(UA)v = Ub$, $\bar a$ is a solution to $(VU)Av = (VU)b$. Thus $\bar a$ is a solution to $Av = b$.

We end this article with an idea for further work on this problem. The reader will note that the introduction included two assumptions. First we assumed that we were working over a field $F$, and moreover, that the characteristic of $F$ is zero. It would be interesting to explore how the theory withstands the weakening of assumptions on $F$. What if $F$ were a field of prime characteristic? How much of the theory could one recover by assuming that $F$ is an arbitrary ring?

\end{document}